\definecolor{mahogany}{cmyk}{0, 0.77, 0.87, 0}
\definecolor{salmon}{cmyk}{0, 0.53, 0.38, 0}
\definecolor{melon}{cmyk}{0, 0.46, 0.50, 0}
\definecolor{yellowgreen}{cmyk}{0.44, 0, 0.74, 0}
\definecolor{brickred}{cmyk}{0, 0.89, 0.94, 0.28}
\definecolor{OliveGreen}{cmyk}{0.64, 0, 0.95, 0.40}
\definecolor{RawSienna}{cmyk}{0, 0.72, 1.0, 0.45}
\definecolor{ZurichRed}{rgb}{1, 0, 0} 
\numberwithin{equation}{section}
\newtheorem{thm}{Theorem}[section]        %newenvironments
\newtheorem{prop}{Proposition}[section]
\newtheorem{rmk}{Remark}[section]
\newcommand{\R}{\mathbb{R}}                  %newcommands
\newcommand{\Rd}{\R^d}               
\newcommand{\ioRd}{\int_{\Rd}}    
\newcommand{\set}[1]{ \left\{#1\right\} }
\newcommand{\myprod}[3]{\prod\limits_{#1=#2}^{#3}}
\newcommand{\abs}[1]{\left|#1\right|}
\newcommand{\tgo}{t\rightarrow 0+}
\newcommand{\F} {(-\Delta)^{ \frac{\alpha}{2} } }
\newcommand{\Prob} {\mathbb{P} }
\newcommand{\palp}{p_{t}^{(\alpha)}}
\newcommand{\dom}{ \Omega}
\newcommand{\pthesis}[1]{\left(#1\right)}
\newcommand{\pint}[1]{\left[#1\right]}
\newcommand{\E}{\mathbb{E}}
\newcommand{\ind}{\mathbbm{1}_{\dom}}
\newcommand{\proc}{{\bf X}}
\newcommand{\Halp}{\mathbb{H}_{\Omega}^{(\alpha)}(t)}
\newcommand{\pf}{\Psi^{(\alpha)}_{\dom}(t)}
\newcommand{\lm}{\mathcal{L}(\Rd)}
\newcommand{\per}{Per(\dom)}
\newcommand{\ld}{\abs{\Omega}}
\newcommand{\mylim}[2]{\lim\limits_{#1}{#2}}
\newcommand{\myP}{\mathcal{P}}
\newcommand{\et}{\tau_{\dom}^{(\alpha)}}
\begin{document}

\title[]{Heat content estimates for the fractional Schr\"odinger operator $\F+\ind$.}
\author{Luis Acu\~na Valverde}
\address{Department of Mathematics, Universidad de Costa Rica, San Jos\'e, Costa Rica.}
\email{luisacunavalverde@ucr.ac.cr/ guillemp22@yahoo.com}

%\date{}
\maketitle

%---------------------------------------------------------------------------------------------------------------------------------

\begin{abstract} 
This paper establishes by employing  analytic and probabilistic techniques  estimates concerning the {\it heat content} for  the fractional Schr\"odinger operator  $\F+\ind$ with $0<\alpha\leq 2$ in $\Rd$, $d\geq 2$ and $\dom$ a Lebesgue  measure set satisfying some regularity conditions.
\end{abstract}

{\small {\bf Keywords:} isotropic stable processes, spectral heat content,  heat content, Schr\"odinger operator, Fractional Laplacian, set of  finite perimeter, $\alpha$--perimeter.} 

\section{introduction}

Let $0<\alpha\leq 2$ and consider $\proc=\set{X_t}_{t\geq0}$ a rotationally  invariant  $\alpha$-stable process in $\Rd$, $d\geq 2$, whose transition densities denoted by $p_t^{(\alpha)}(x,y)=\palp(x-y)$ are uniquely determined by their Fourier transform (characteristic function) and which are given by 
\begin{equation}\label{CharF.Proc}
e^{-t\abs{\xi}^{\alpha}}=\E_{0}[e^{-\dot{\iota} \xi\cdot X_t }]=\ioRd dy\,e^{-\dot{\iota}y\cdot\xi}\palp(y),
\end{equation}
for all $t>0$, $\xi\in\Rd$. Henceforth, $\mathbb{E}_x$ will represent the expectation of the process $\proc$ starting at $x$. One  fact about $\palp(x,y)$ to be used in our proofs and it can be deduced from the last identity is that for all $a>0$, $t>0$ and $z\in\Rd$, we have 
\begin{align}\label{basequ}
\int_{\Rd}dx\,p_{ta}^{(\alpha)}(x,z)=1.
\end{align}
It is  worth mentioning that the above heat kernels are {\it positive} and radial. Also, they have explicit expressions only for $\alpha=1$ and $\alpha=2$. In fact, for  $\alpha=2$, $\proc$ is a Brownian motion with transition density given by the Gaussian kernel. That is,
\begin{align*}
p_t^{(2)}(x,y)=(4\pi t)^{-d/2}e^{-\abs{x-y}/4t}.
\end{align*}
Regarding $\alpha=1$, $\proc$ corresponds to the Cauchy process with transition densities given by the Poisson kernel. Namely,
\begin{align*}%\label{Cauchyk}
p^{(1)}_t(x,y)=\frac{k_d\,t}{\pthesis{t^2+\abs{x-y}^2}^{(d+1)/2}},
\end{align*}  
where
\begin{align}\label{kappadef}
k_d=\frac{\Gamma\pthesis{\frac{d+1}{2}}}{\pi^{\frac{d+1}{2}}}.
\end{align} 

In general and  for the purposes of this paper, we shall only require  the following two  facts about $\palp(x,y)$ for all $\alpha \in (0,2)$.  First, there exists $c_{\alpha,d}>0$ such that for all $x,y\in \R^d$ and $t>0$ (see \cite{Chen1} for more details), we have that
\begin{align}\label{tcomp}
c_{\alpha,d}^{-1}\min\set{t^{-d/\alpha},\frac{t}{\abs{x-y}^{d+\alpha}}}\leq
\palp(x-y)\leq c_{\alpha,d}\min\set{t^{-d/\alpha},\frac{t}{\abs{x-y}^{d+\alpha}}}.
\end{align}
  Secondly, according to \cite[Theorem 2.1]{Blum}, we have
\begin{align*}%\label{tlim}
\lim_{\tgo}\frac{\palp(x-y)}{t}=\frac{\beta_{\alpha,d}}{\abs{x-y}^{d+\alpha}},
\end{align*}
for all $x\neq y$, where 
\begin{equation}\label{Adef}
\beta_{\alpha,d}=\alpha \, 2^{\alpha-1}\, \pi^{-1-\frac{d}{2}}\,\sin\pthesis{\frac{\pi\alpha}{2}}\,\Gamma\pthesis{\frac{d+\alpha}{2}}\,\Gamma\pthesis{\frac{\alpha}{2}}.
\end{equation}
%-----------------------------------------------------------------------------------------

On the other hand,  it is well known that the linear operator $\F$ called the Fractional Laplacian corresponds to the infinitesimal generator of $\proc$, whereas the linear operator $\F +\ind$ is called  the  Fractional Schr\"{o}dinger operator with respect to the potential $\ind$. This last operator is self-adjoint and it can be defined  as the infinitesimal generator of the heat semigroup given by
$$\E_x[e^{-\int_{0}^{t}\,ds\ind(X_s)}f(X_t)],$$
for $f \in \mathcal{S}(\Rd)$(Schwartz class).  The  transition densities of the aforementioned  heat semigroup can be represented by means of the Feynman-Kac formula (see for example  \cite{AcuBa, Acu5, BanYil, Hir, Kal, Sim} for further details concerning infinitesimal generators and heat kernels representations) which is given by
\begin{equation}\label{Fey.Kac.f}
\palp(x,y)\mathbb{E}_{x,y}^t\left[e^{-\int_{0}^{t}ds\,
\ind(X_{s})}\right],
\end{equation} 
where  $\mathbb{E}_{x,y}^t$ stands for the expectation with respect to the stable bridge process  associated with $\proc$ starting at $x$ and conditioned to be at $y$ at time $t$. 

With the proper introduction of the heat kernels provided in the identities \eqref{CharF.Proc} and \eqref{Fey.Kac.f}, we proceed to present the object to be studied. In this paper, we shall investigate the  behavior of the following function
\begin{align}\label{Schdef}
\Psi^{(\alpha)}_{\dom}(t)=\int_{\Rd}dx\int_{\Rd}dy\,\palp(x,y)\pthesis{1-\E^{t}_{x,y}\pint{e^{-\int_{0}^{t}ds\,\mathbbm{1}_{\dom}(X_s)}}},
\end{align}
when $\tgo$ and where $\dom \subseteq \Rd$ stands for a Lebesgue measurable set  to satisfy additional conditions. Based on the definition \eqref{Schdef}, we observe that the main purpose  of $\pf$ is to describe how different are  the heat kernels $\palp(x,y)\E^{t}_{x,y}\pint{e^{-\int_{0}^{t}ds\,\mathbbm{1}_{\dom}(X_s)}}$ and $\palp(x,y)$   in average at time $t$. One of the main goals of this paper is to reflect how  geometric features about  $\dom$ intervene on the  small time asymptotic behavior of $\pf$. The function $\Psi^{(\alpha)}_{\dom}(t)$ is called {\it the heat content for the Schr\"odinger operator} $\F+\ind$, since  according to  \cite{AcuBa},  it resembles  {\it the heat
content} for $\dom$ with respect to the process $\proc$, 
which is defined by
\begin{align}\label{hcdef}
\Halp=\int_{\dom}dz_1\int_{\dom}dz_2\,\palp(z_1,z_2)=\int_{\dom}dz_1\, \Prob_{z_1}(X_t\in \dom).
\end{align}

In order to motivate our work, we will denote throughout the paper the set of all Lebesgue measurable sets of $\R^d$ by $\lm$ and for $\dom\in \lm$ bounded, we will use $\ld$ to represent its Lebesgue measure.
%-----------------------------------------------------------------------------------------

We now proceed to discuss about $\Halp$. We point out that $\Halp$ is a bounded function for any $\dom \in \lm$ of finite measure since by \eqref{basequ} we have
\begin{align}\label{boundhc}
0\leq \Halp\leq \int_{\dom}dz_1\int_{\Rd}dz_2 \,p_{t}^{(\alpha)}(z_1, z_2)=\int_{\dom}dz_1=\ld,
\end{align}
for all $t\geq 0$. Results concerning the small time asymptotic behavior of  $\Halp$ have been developed in the recent years (see for instance Theorem \ref{essencialtools} below  and \cite{AcuPoisson} for current developments on the heat content for the Cauchy process) and they shall be essential to obtain estimates for $\Psi_{\dom}^{(\alpha)}(t)$ so that we refer the interested reader to  \cite{Acu1, Acu, Acu4} regarding estimates for the heat content for stable processes and \cite{Cyg, Grz} for results concerning more general L\'evy processes.  These results are of great interest   because these asymptotic behaviors involve terms describing geometric features about the underlying set $\dom$ as volume, surface area, mean curvature, etc. Also, $\Halp$ has been used in \cite{Acu} to obtain estimates about the behavior of the second term as $\tgo$ for {\it the  spectral heat content} defined by
\begin{align}\label{defshc}
Q_{\dom}^{(\alpha)}(t)=\int_{\dom}dx\,\Prob_{x}\pthesis{X_s\in \dom, \forall s\leq t},
\end{align}
whose  explicit second order expansion   is still unknown   for $0<\alpha<2$ since for the Brownian motion  was proved  in  \cite{van2} for smooth bounded  domains $\dom\subseteq \R^d$, $d\geq 2$ that
%===================================================
\begin{equation*}\label{VandenberHeatContBM}
Q_{\dom}^{(2)}(t)=\ld-\frac{2}{\sqrt{\pi}}|\partial \Omega|t^{1/2}+\left(2^{-1}(d-1)\int_{\partial \Omega}M(s)ds\right) t +\mathcal{O}(t^{3/2}),  
\end{equation*}
as $t\downarrow 0$.  Here, $M(s)$ denotes the mean curvature at the point $s\in \partial \dom$.

 Before continuing, we shall make some remarks concerning {\it the heat content and the spectral heat content} to be needed later. We first notice that {\it the spectral heat content} defined in \eqref{defshc} can be rewritten in terms of the exit time of $\proc$ from $\dom$. That is, if we define the following random variable
 \begin{align}\label{et}
 \et=\inf\set{s\geq 0: X_s\in \dom^{c}},
 \end{align}
known as the first exit time from $\dom$, then
\begin{align}\label{et}
Q_{\dom}^{(\alpha)}(t)=\int_{\dom}dx\,\Prob_{x}\pthesis{t\leq \et},
 \end{align}
 since  the events $\set{t\leq \et}$ and $\set{X_s\in \dom, \forall s\leq t}$ are identical. Moreover, it follows   from  \eqref{et} that $Q_{\dom}^{(\alpha)}(t)$ is a non increasing function with $Q_{\dom}^{(\alpha)}(0)=\ld$ and 
 \begin{align}\label{ineqQH}
 Q_{\dom}^{(\alpha)}(t)\leq \Halp,
 \end{align}
 since $\set{X_s\in \dom, \forall s\leq t}\subseteq \set{X_t\in \dom}$.

It is noteworthy that the estimates obtained below for 
$\pf$ come basically from providing  an asymptotic expansion for  $\pf$, where the functions $\Halp$ and $Q_{\dom}^{(\alpha)}(t)$  are involved (see Theorem \ref{thmT} and Theorem \ref{lem2} below).  Hence, this last  fact highlights the  deep connection among  these functions.

It should be mentioned that another relevant reason of why to investigate  the function $\Psi_{\dom}^{(\alpha)}(t)$  is due to the fact that it is linked to a wide variety of areas in probability since the stochastic integral 
\begin{align*}
\int_{0}^{t}ds\ind(X_s),
\end{align*} 
which is just the total time spent in $\dom$ up to time
$t$ by the stable process $\proc$ appears  in  many  problems regarding   occupation  measures,  local times and conditional gauge theorems.
%----------------------------------------------------------------------------------------

We now carry on with the presentation of our main result. To do so, we require the concept of finite perimeter for a Lebesgue measurable set which we proceed to define.  We say that a bounded set $\dom \in \lm$ has finite perimeter if 
\begin{align}\label{defper}
0\leq  \sup\set{\int_{\dom}dx\,{\bf div} \varphi(x): \varphi\in C^{1}_{c}(\Rd,\Rd), ||\varphi||_{\infty}\leq 1}<\infty
 \end{align}     
 and we denote the last quantity by $Per(\Omega)$. The quantity $Per(\Omega)$ is called the perimeter of $\dom$ and coincides with the surface area of $\dom$ provided that $\dom$ has smooth boundary. We refer the interested reader on specifics about the perimeter of a set to \cite{Evans, Galerne,  Mor}. 
 
 On the other hand, for $0<\alpha<1$ and  $\Omega\in \lm$ bounded set, the $\alpha$-perimeter is denoted by $\myP_{\alpha}(\dom)$ and defined by 
 \begin{align}\label{alphaper}
\myP_{\alpha}(\Omega)=\int_{\dom}dy\int_{\dom^c}\frac{dx}{|x-y|^{d+\alpha}}.
\end{align}
We remark  that  $\myP_{\alpha}(\Omega)<\infty$  provided that $\per <\infty$ and $0<\alpha<1$ according to Corollary 2.13 in \cite{Lau}. Moreover, $\myP_{\alpha}(\Omega)$  turns out to be linked with celebrated Hardy and isoperimetric inequalities.  We refer the  reader to the papers of Z. Q. Chen, R. Song \cite{Song} and R. L. Frank, R. Seiringer \cite{Frank} for  further results involving this quantity. In fact, it is shown in \cite{Frank} that there  exists $\lambda_{d,\alpha}>0$ such that
\begin{align*}
|\Omega|^{(d-\alpha)/d}\leq \lambda_{d,\alpha}\,\myP_{\alpha}\pthesis{\Omega},
\end{align*}
with equality if and only if $\Omega$ is a ball.
It is also proved in \cite{Lau} and \cite{FracP} that
\begin{align*}
\lim_{\alpha\downarrow 0}\alpha \myP_{\alpha}(\Omega)=d\,\,|B_1(0)|\,\,|\Omega|,\nonumber \\
\lim_{\alpha\uparrow 1}(1-\alpha)\myP_{\alpha}(\Omega)=K_{d}\,\,\per,
\end{align*}  
for some $K_d>0.$

With all the geometric objects properly introduced, we  continue with the statement of our main result, but before doing so, we point out that the main purpose of this result is to  tell us how fast $\Psi_{\dom}^{(\alpha)}(t)$ goes to zero in terms of an upper bound involving geometric properties of the set $\dom$.

\begin{thm}\label{mthm} Let $d\geq2$ be an integer. Consider $\dom \in \lm$ a bounded set with $\per<\infty$ and nonempty interior. 
\begin{enumerate}
\item[$(i)$] Consider  $\alpha\in (1,2]$ and set
$$c_{\alpha}^{*}=\frac{\alpha^2\,\Gamma\pthesis{1-\frac{1}{\alpha}}}{\pi(1+\alpha)(1+2\alpha)}.$$
Then, we have  for all $t>0$ that
\begin{align}\label{ineq4}
 \Psi_{\dom}^{(\alpha)}(t)+t\pthesis{\frac{t}{2}-1}\ld \leq \frac{\ld}{3!}t^{3}+c_{\alpha}^{*}\per t^{2+\frac{1}{\alpha}}
\end{align}
and
\begin{align}\label{lim4}
\lim\limits_{\tgo}\frac{\Psi_{\dom}^{(\alpha)}(t)+t\pthesis{\frac{t}{2}-1}\ld }{t^{2+\frac{1}{\alpha}}}=c_{\alpha}^{*}\per.
\end{align}
\item[$(ii)$] For $\alpha=1$, there exists a constant $\gamma_d(\dom)>0$ such that
\begin{align*}
\Psi_{\dom}^{(1)}(t)+t\pthesis{\frac{t}{2}-1}\ld  \leq  \gamma_d(\dom)t^{3}+\frac{\per}{3!\pi}t^{3}\ln\pthesis{\frac{1}{t}}
\end{align*}
provided that $0<t<\min\set{diam(\dom),e^{-1}}$ where $diam(\dom)=\sup\set{\abs{x-y}:x,y\in \dom}$.
As a result, 
\begin{align*}
\varlimsup\limits_{\tgo}\frac{\Psi_{\dom}^{(1)}(t)+t\pthesis{\frac{t}{2}-1}\ld }{t^{3}\ln\pthesis{\frac{1}{t}}}\leq \frac{\per}{3!\pi}.
\end{align*}

\item[$(iii)$] Consider $0<\alpha<1$. Then, for all $t>0$, we obtain that
\begin{align}\label{3rd} 
\Psi_{\dom}^{(\alpha)}(t)+t\pthesis{\frac{t}{2}-1}\ld \leq
\frac{t^3}{3!}\pthesis{\ld+c_{\alpha,d}\myP_{\alpha}(\Omega)}
\end{align}
with $\myP_{\alpha}(\Omega)$ and $c_{\alpha,d}$ as defined in \eqref{alphaper} and \eqref{tcomp}, respectively. Furthermore, 
\begin{align*}
\lim\limits_{\tgo}\frac{\Psi_{\dom}^{(\alpha)}(t)+t\pthesis{\frac{t}{2}-1}\ld}{t^{3}}=\frac{1}{3!}\pthesis{ \ld+\beta_{\alpha,d}\myP_{\alpha}(\Omega)}
\end{align*}
with $\beta_{\alpha,d}$  as defined in \eqref{Adef}.
\end{enumerate}
\end{thm}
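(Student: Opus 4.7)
The proof reduces $\pf$ to integrals of the heat content $\Halp$, whose small-time behaviour is supplied by the companion result Theorem~\ref{essencialtools}. Setting $A_t := \int_0^t \ind(X_s)\,ds \in [0,t]$, Taylor's theorem applied to $u \mapsto e^{-u}$ on $[0,A_t]$ yields the two-sided estimate
$$
A_t - \frac{A_t^2}{2} \;\leq\; 1 - e^{-A_t} \;\leq\; A_t - \frac{A_t^2}{2} + \frac{A_t^3}{6},
$$
so it suffices to compute the bridge moments $\iint dx\,dy\,\palp(x,y)\E^t_{x,y}[A_t^k]$ for $k=1,2,3$. Writing $A_t^k = k!\int_{s_1<\cdots<s_k} \ind(X_{s_1})\cdots \ind(X_{s_k})\,ds_1\cdots ds_k$ and applying the Markov property of the stable bridge at each intermediate time collapses the bridge expectation onto unconditioned kernels: for $0 < s_1 < \cdots < s_k < t$,
$$
\iint dx\,dy\,\palp(x,y)\,\E^t_{x,y}\bigl[\ind(X_{s_1})\cdots \ind(X_{s_k})\bigr] = \int_{\dom^k} \prod_{i=1}^{k-1} p_{s_{i+1}-s_i}^{(\alpha)}(z_i,z_{i+1})\,dz_1\cdots dz_k,
$$
where \eqref{basequ} eliminates the outer $x$-, $y$-integrations.

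Carrying out the simplex time-integrations and the change of variables $u=s_{i+1}-s_i$ shows the first moment equals $t\ld$, the second equals $2\int_0^t\!\int_0^v \mathbb{H}_{\dom}^{(\alpha)}(u)\,du\,dv$, and the third is bounded by $t^3\ld$ (integrating the innermost chain $\int_{\dom} dz_k \, p_{s_k-s_{k-1}}^{(\alpha)}(z_{k-1},z_k)\leq 1$ iteratively). Combining these identities with the Taylor sandwich yields the key two-sided estimate
$$
\int_0^t\!\!\int_0^v \bigl(\ld - \mathbb{H}_{\dom}^{(\alpha)}(u)\bigr) du\,dv \;\leq\; \pf + t\pthesis{\tfrac{t}{2}-1}\ld \;\leq\; \int_0^t\!\!\int_0^v \bigl(\ld - \mathbb{H}_{\dom}^{(\alpha)}(u)\bigr) du\,dv + \frac{\ld\, t^3}{6}.
$$
Each case then follows by substituting the corresponding bound for $\ld - \mathbb{H}_{\dom}^{(\alpha)}(u)$ from Theorem~\ref{essencialtools}: in (i), $\ld-\mathbb{H}_{\dom}^{(\alpha)}(u)\leq \tfrac{\Gamma(1-1/\alpha)}{\pi}\per u^{1/\alpha}$ integrates exactly to $c_\alpha^*\per\, t^{2+1/\alpha}$ since $\int_0^t\!\int_0^v u^{1/\alpha}du\,dv = \tfrac{\alpha^2}{(\alpha+1)(2\alpha+1)}t^{2+1/\alpha}$; in (ii) the $\alpha=1$ leading behaviour $\sim \tfrac{\per}{\pi}u\log(1/u)$ integrates to $\tfrac{\per}{6\pi}t^3\log(1/t)$, the subleading $O(u)$ remainder being absorbed into $\gamma_d(\dom)t^3$ (the constraint $t<e^{-1}$ simply keeps the log positive); in (iii) the pointwise kernel estimate \eqref{tcomp} directly gives $\ld-\mathbb{H}_{\dom}^{(\alpha)}(u) \leq c_{\alpha,d}\myP_\alpha(\dom)\,u$, double-integrating to $c_{\alpha,d}\myP_\alpha(\dom)\,t^3/6$.

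The main obstacle is upgrading the upper bound to the sharp limits. For (i) and (ii) this is automatic because the leftover $\ld t^3/6$ on the right of the sandwich is $o(t^{2+1/\alpha})$ and $o(t^3\log(1/t))$ respectively, so both limits follow by squeezing with the asymptotic part of Theorem~\ref{essencialtools}. For (iii), however, the $\ld/6$ term in the limit constant cannot be discarded: the crude third-moment bound $t^3\ld$ must be replaced by its sharp asymptotic. This is done by inserting the refined Lagrange remainder $1-e^{-A_t}\geq A_t - A_t^2/2 + e^{-t}A_t^3/6$, combined with the fact that dominated convergence applied to the triple heat-kernel integral from the Markov identity above yields $\iint dx\,dy\,\palp(x,y)\E^t_{x,y}[A_t^3]/t^3 \to \ld$ as $\tgo$, thereby matching the $\liminf$ with the $\limsup$.
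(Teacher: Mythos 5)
Your proposal follows essentially the same route as the paper: Taylor-expand $1-e^{-A_t}$ to second order with a cubic remainder, identify the first and second bridge moments with $t\ld$ and a double time-integral of $\Halp$ respectively, sandwich the remainder using the third moment, and then feed the heat-content estimates of Theorem~\ref{essencialtools} into the resulting double integral (using Fatou/dominated convergence to pass the $\tgo$ asymptotics inside and $e^{-t}A_t^3/3!$ with $t^{-3}T^{(3)}_{\dom}(t)\to\ld$ to sharpen the remainder in part~(iii)). The only cosmetic difference is that the paper establishes $t^{-3}T^{(3)}_{\dom}(t)\to\ld$ by sandwiching between the spectral heat content $Q^{(\alpha)}_{\dom}(t)$ and $\ld$ rather than by the dominated-convergence argument you sketch, but both routes are valid and yield the same conclusion.
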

%-----------------------------------------------------------------------------------------

The paper is organized as follows.  In \S \ref{sec:prelim}, we develop the main tools to be used in the proof of Theorem \ref{mthm}. Finally, in \S \ref{sec:proofthm}, we provide the proof of our main result by appealing to estimates about  $\Halp$  shown in \cite{Acu1}.

\section{preliminaries results}\label{sec:prelim}
In this section, we shall develop the main tools to prove Theorem \ref{mthm} by employing some of the techniques found in \cite{AcuBa} and \cite{BanYil}. 

We  begin  by  introducing  some notation to conveniently express our formulas below.  Consider $k\in \mathbb{N}$ and   $\dom\subseteq \R^d$. We set
\begin{align}\label{heavynotation}
\dom^{k}&=\set{(w_1,w_2,...,w_k): w_j\in \dom,\, j=1,...,k},\\ \nonumber
I_k &=\set{(\lambda_{1},\lambda_{2},...,\lambda_k)\in [0,1]^{k}: 0<\lambda_{1}<\lambda_{2}< ... < \lambda_{k}<1},
\end{align}
and  $dz^{(k)}=dz_kdz_{k-1}...dz_1$ for $z_j\in \Rd$, $j=1,...,k$. Also, we define $h_{\dom}^{(k)}:\R_{+}^{k}\rightarrow \R_+$
by
\begin{align}\label{hfunction}
h_{\dom}^{(k)}(t,a_1,...,a_{k-1})=\int_{\dom^k}dz^{(k)}\myprod{j}{1}{k-1}p_{ta_j}^{(\alpha)}(z_j,z_{j+1}).
\end{align}

\begin{rmk}
At this point, we note  that the key ingredient in our results is the knowledge about the behavior of $h_{\dom}^{(2)}(t,a)$ for $a>0$ fixed as $\tgo$ since this function is equal to the heat content $\mathbb{H}_{\Omega}^{(\alpha)}(at)$. However, in order to obtain better estimates for $\Psi_{\dom}^{(\alpha)}(t)$ is required to know 
how the functions $h_{\dom}^{(k)}(t)$ behave for small time provided that $k\geq 3$ and this is an open problem so far.
\end{rmk}
 We now recall that the finite dimensional distributions of the stable bridge  process  (see \cite{BanYil}, \cite{Ber} and references therein for details) are given by
\begin{align}\label{stablebridge}
&\Prob^t_{x,y}\left(X_{s_1} \in dz_1 , X_{s_2} \in dz_2 , . . . , X_{s_k} \in dz_k\right)\\\nonumber &=\frac{dz^{(k)}}{\palp(x, y)}p_{s_{1}}^{(\alpha)}(x,z_{1})\pthesis{\myprod{j}{1}{k-1}p_{s_{j+1}-s_j}^{(\alpha)}(z_j,z_{j+1})}p_{t-s_k}^{(\alpha)}(z_{k},y),
\end{align}
where  $0<s_1<...<s_k<t$ and $x,y\in \Rd$.
%-----------------------------------------------------------------------------------------

The following proposition shows that the function $\Psi_{\dom}^{(\alpha)}(t)$ is bounded provided that $\dom\subseteq \Rd$ is a Lebesgue measurable set of finite measure.

\begin{prop} \label{bl1} Let $\dom\in \lm$ have  finite measure. Then, for all $t\geq 0$, we have that
$$0\leq \Psi_{\dom}^{(\alpha)}(t)\leq t|\dom|.$$
\end{prop}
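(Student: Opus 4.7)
The lower bound $0\leq \Psi_{\dom}^{(\alpha)}(t)$ is immediate: since $\int_{0}^{t}ds\,\mathbbm{1}_{\dom}(X_s)\geq 0$, the bridge expectation of the exponential lies in $[0,1]$, and $\palp(x,y)\geq 0$, so the integrand defining $\pf$ is non-negative.

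For the upper bound, the plan is to exploit the elementary inequality $1-e^{-u}\leq u$ for $u\geq 0$. Applied inside the bridge expectation, this yields
\begin{align*}
1-\E^{t}_{x,y}\!\left[e^{-\int_{0}^{t}ds\,\mathbbm{1}_{\dom}(X_s)}\right]
=\E^{t}_{x,y}\!\left[1-e^{-\int_{0}^{t}ds\,\mathbbm{1}_{\dom}(X_s)}\right]
\leq \E^{t}_{x,y}\!\left[\int_{0}^{t}ds\,\mathbbm{1}_{\dom}(X_s)\right].
\end{align*}
Substituting this into \eqref{Schdef} and using Fubini to bring the $s$-integral outside gives
\begin{align*}
\pf\leq \int_{0}^{t}ds\int_{\Rd}dx\int_{\Rd}dy\,\palp(x,y)\,\E^{t}_{x,y}\!\left[\mathbbm{1}_{\dom}(X_s)\right].
\end{align*}

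Next, I would use the stable bridge one-time marginal from \eqref{stablebridge} (with $k=1$ and $s_1=s$): for $0<s<t$,
\begin{align*}
\palp(x,y)\,\E^{t}_{x,y}\!\left[\mathbbm{1}_{\dom}(X_s)\right]
=\int_{\dom}dz_1\,p_{s}^{(\alpha)}(x,z_1)\,p_{t-s}^{(\alpha)}(z_1,y).
\end{align*}
This is the key step, since the unknown denominator $\palp(x,y)$ cancels and the integrand factorizes. Plugging in and applying Fubini once more, together with the mass-one identity \eqref{basequ} applied to both the $x$-integral (with $a=s/t$) and the $y$-integral (with $a=(t-s)/t$), one finds
\begin{align*}
\int_{\Rd}dx\int_{\Rd}dy\,p_{s}^{(\alpha)}(x,z_1)\,p_{t-s}^{(\alpha)}(z_1,y)=1,
\end{align*}
so the triple integral reduces to $\int_{\dom}dz_1=\ld$, and integrating over $s\in[0,t]$ yields the bound $t\ld$.

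There is no real obstacle here; the only point to be careful about is justifying Fubini for the iterated integral, which is fine because every integrand in sight is non-negative, so Tonelli's theorem applies unconditionally.
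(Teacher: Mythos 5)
Your proposal is correct and follows essentially the same route as the paper: the inequality $1-e^{-u}\leq u$, the one-point bridge marginal from \eqref{stablebridge} to cancel the denominator $\palp(x,y)$, then Fubini/Tonelli and the mass-one identity \eqref{basequ}. The only cosmetic difference is that you explicitly flag Tonelli to justify the interchange of integrals, which the paper leaves implicit.
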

\begin{proof}
By appealing to the basic inequality $0\leq 1-e^{-w}\leq w$ for $w\geq 0$, we obtain that
\begin{align}\label{psi1}
0 \leq \Psi_{\dom}^{(\alpha)}(t)\leq \int_{\Rd}dx \int_{\Rd}dy\,\palp(x,y)\E^{t}_{x,y}\pint{\int_{0}^{t}ds\,\mathbbm{1}_{\dom}(X_s)}.
\end{align}

Now, by using Fubini's theorem and the finite distribution of the stable bridge provided in \eqref{stablebridge}, we arrive at
\begin{align*}
\E^{t}_{x,y}\pint{\int_{0}^{t}ds\,\mathbbm{1}_{\dom}(X_s)}&=
\int_{0}^{t}ds\,\E^{t}_{x,y}\pint{\mathbbm{1}_{\dom}(X_s)}=
\int_{0}^{t}ds\,\int_{\Rd}\,\Prob_{x,y}^{t}(X_s\in dz)\ind(z)
\\&=
\int_{0}^{t}ds\int_{\dom}dz\frac{p^{(\alpha)}_{s}(x,z)p^{(\alpha)}_{t-s}(z,y)}{\palp(x,y)}.
\end{align*}
Therefore, due to the last identity, the inequality \eqref{psi1} and Fubini's theorem, we conclude that
\begin{align*}
0 \leq \Psi_{\dom}^{(\alpha)}(t)\leq \int_{0}^{t}ds\int_{\dom}dz \int_{\Rd}dx\,p^{(\alpha)}_{s}(x,z)\int_{\Rd}dy\,p^{(\alpha)}_{t-s}(z,y)=t|\dom|,
\end{align*}
where we have used \eqref{basequ} to obtain the last identity.
\end{proof}

Our next result  consists in a generalization of a conclusion contained in  the proof of the last proposition. Additionally, it also shows us where {\it the spectral heat content } comes into play.
%----------------------------------------------------------------------------------------
\begin{thm}\label{thmT}  Let $\dom\in \lm$ have  finite measure.  Consider $k\in \mathbbm{N}$ and $Q_{\dom}^{(\alpha)}(t)$ as defined in \eqref{defshc}. Define
\begin{align}\label{Tdef}
T_{\dom}^{(k)}(t)=\int_{\Rd}dx\int_{\Rd}dy \,\palp(x,y)\E^{t}_{x,y}\pint{\pthesis{\int_{0}^{t}ds \mathbbm{1}_{\dom}(X_s)}^{k}}.
\end{align}
Then, for all $t\geq 0$, we have that 
\begin{enumerate}
\item[$(a)$] 
$t^{k} Q_{\dom}^{(\alpha)}(t) \leq T_{\dom}^{(k)}(t)\leq t^{k}|\dom|$,
\\
\item[$(b)$] 

$$T_{\dom}^{(k)}(t) =k!\,t^k\int_{I_k}d\lambda^{(k)}h_{\dom}^{(k)}(t,\lambda_2-\lambda_1,...,\lambda_{k}-\lambda_{k-1})$$
with $I_k$ and $h_{\dom}^{(k)}$ as defined in \eqref{heavynotation}
and \eqref{hfunction}, respectively. 
\end{enumerate}
Furthermore,
\begin{align}\label{implimit}
\lim\limits_{\tgo}t^{-k}\,T^{(k)}_{\dom}(t)=\ld.
\end{align}
\end{thm}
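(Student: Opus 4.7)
The key input throughout is the bridge-to-unconditional identity $\int_{\R^d} dy\,\palp(x,y)\,\E_{x,y}^t[F] = \E_x[F]$ for positive functionals $F$, obtained by integrating \eqref{stablebridge} in $y$ and using \eqref{basequ}. This recasts \eqref{Tdef} as $T_{\dom}^{(k)}(t) = \int_{\R^d} dx\,\E_x[(\int_0^t \ind(X_s)\,ds)^k]$, a form amenable to direct estimation.

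For part (a), the upper bound follows from the deterministic inequality $\int_0^t\ind(X_s)\,ds \leq t$, giving $(\int_0^t\ind(X_s)\,ds)^k \leq t^{k-1}\int_0^t\ind(X_s)\,ds$; the Fubini computation already carried out in Proposition \ref{bl1} produces $T_{\dom}^{(k)}(t)\leq t^{k-1}\cdot t|\dom| = t^k|\dom|$. The lower bound uses that on the event $\{X_s\in\dom,\,\forall s\leq t\}$ the occupation integral equals $t$ exactly, hence $(\int_0^t\ind(X_s)\,ds)^k \geq t^k\mathbbm{1}_{\{X_s\in\dom,\,\forall s\leq t\}}$; restricting the outer $x$-integration from $\R^d$ to $\dom$ and applying \eqref{defshc}/\eqref{et} identifies the resulting lower bound as $t^k Q_{\dom}^{(\alpha)}(t)$.

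For part (b), I symmetrize in the time variables:
$$\pthesis{\int_0^t \ind(X_s)\,ds}^{\!k} = k!\int_{0<s_1<\cdots<s_k<t}\myprod{j}{1}{k}\ind(X_{s_j})\,ds^{(k)}.$$
Taking bridge expectations, substituting the finite-dimensional distribution \eqref{stablebridge}, and reinserting into \eqref{Tdef} causes the prefactor $\palp(x,y)^{-1}$ to cancel $\palp(x,y)$. The outer $x$- and $y$-integrations against $p_{s_1}^{(\alpha)}(x,z_1)$ and $p_{t-s_k}^{(\alpha)}(z_k,y)$ then each collapse to $1$ via \eqref{basequ}, leaving $\int_{\dom^k} dz^{(k)}\myprod{j}{1}{k-1}p_{s_{j+1}-s_j}^{(\alpha)}(z_j,z_{j+1})$. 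Rescaling $s_j = t\lambda_j$ transforms $ds^{(k)}$ into $t^k\,d\lambda^{(k)}$, maps the ordered simplex onto $I_k$, and converts $s_{j+1}-s_j$ into $t(\lambda_{j+1}-\lambda_j)$; matching this against \eqref{hfunction} delivers the stated identity.

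Finally, the limit \eqref{implimit} is a squeeze off part (a): since $Q_{\dom}^{(\alpha)}(t) \leq t^{-k} T_{\dom}^{(k)}(t)\leq |\dom|$, it suffices to prove $\mylim{\tgo}{Q_{\dom}^{(\alpha)}(t)} = |\dom|$. Monotone convergence applied to the representation $Q_{\dom}^{(\alpha)}(t) = \int_\dom dx\,\Prob_x(t\leq\et)$ reduces this to $\Prob_x(\et>0)=1$ for almost every $x\in\dom$, which follows from right-continuity of stable sample paths at interior points. I expect the subtlest step to be this last identification: under just the finite-measure hypothesis one needs $|\partial\dom|=0$ to absorb the boundary, which is automatic for the finite-perimeter sets to which Theorem \ref{mthm} is ultimately applied.
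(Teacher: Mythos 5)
Your arguments for parts $(a)$ and $(b)$ reproduce the paper's proof essentially line for line: the upper bound of $(a)$ via $(\int_0^t\ind(X_s)\,ds)^k\leq t^{k-1}\int_0^t\ind(X_s)\,ds$ together with $T^{(1)}_{\dom}(t)=t|\dom|$ from Proposition~\ref{bl1}; the lower bound by restricting to $\{t\leq\et\}$ where the occupation integral equals $t$ exactly; and $(b)$ by symmetrizing over the ordered simplex, substituting \eqref{stablebridge}, cancelling $\palp(x,y)$, and integrating out the endpoints with \eqref{basequ}. That you symmetrize over $\{0<s_1<\cdots<s_k<t\}$ first and then rescale, whereas the paper first writes $\int_0^t\ind(X_s)\,ds=t\int_0^1\ind(X_{tu})\,du$ and then applies the Trif identity \eqref{symmetric}, is a cosmetic difference.

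For \eqref{implimit} you have put your finger on a genuine gap that the paper elides. The paper writes $\varliminf_{\tgo}\Prob_x(t\leq\et)=\Prob_x(0\leq\et)=1$, but the events $\{t\leq\et\}$ increase, as $t\downarrow 0$, to $\{\et>0\}$, not to the vacuously full event $\{\et\geq 0\}$. What is actually needed is $\Prob_x(\et>0)=1$ for a.e.\ $x\in\dom$, i.e.\ that a.e.\ point of $\dom$ is irregular for $\dom^c$ under $\proc$, and this is not automatic from $|\dom|<\infty$. Your instinct here is correct, but two caveats. First, $|\partial\dom|=0$ is \emph{not} automatic from finite perimeter: one can adjoin a dense measure-zero set to a ball without changing its perimeter while inflating its topological boundary to positive measure. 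To salvage the $Q$-based route one would have to pass to the measure-theoretic interior $\dom^{(1)}$ and then argue that density-one points are irregular for $\dom^c$ under $\alpha$-stable processes, which is itself a nontrivial potential-theoretic statement. Second, and more usefully, the limit \eqref{implimit} follows for \emph{arbitrary} $\dom\in\lm$ of finite measure without any recourse to $Q_\dom^{(\alpha)}$: by part $(b)$, dominated convergence on $I_k$, and $\int_{I_k}d\lambda^{(k)}=1/k!$, it suffices that $h^{(k)}_\dom(t,a_1,\ldots,a_{k-1})\to|\dom|$ as $\tgo$ for fixed $a_j>0$. This follows by induction on $k$ from
\begin{align*}
0\leq h^{(k-1)}_\dom-h^{(k)}_\dom
&=\int_{\dom^{k-1}}dz^{(k-1)}\myprod{j}{1}{k-2}p^{(\alpha)}_{ta_j}(z_j,z_{j+1})\int_{\dom^c}dz_k\,p^{(\alpha)}_{ta_{k-1}}(z_{k-1},z_k)\\
&\leq\int_\dom dz_{k-1}\int_{\dom^c}dz_k\,p^{(\alpha)}_{ta_{k-1}}(z_{k-1},z_k)=|\dom|-\mathbb{H}^{(\alpha)}_\dom(ta_{k-1})\longrightarrow 0,
\end{align*}
with the base case $k=2$ being precisely the heat-content convergence $\mathbb{H}^{(\alpha)}_\dom(s)\to|\dom|$, valid for any finite-measure set. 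This route proves the stated limit under the theorem's actual hypotheses and sidesteps the regularity issue altogether.
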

\begin{proof} To begin with, we notice  that
$$\pthesis{\int_{0}^{t}ds \mathbbm{1}_{\dom}(X_s)}^{k}\leq t^{k-1}\int_{0}^{t}ds \mathbbm{1}_{\dom}(X_s)$$
which in turn implies by the definition of $T_{\dom}^{(k)}(t)$ in \eqref{Tdef} that
\begin{align}\label{inq2}
T_{\dom}^{(k)}(t)\leq t^{k-1}T_{\dom}^{(1)}(t).
\end{align}
Now, we remark that  the proof of Proposition \ref{bl1}  asserts that
$T_{\dom}^{(1)}(t)= t|\dom|$ so that together with \eqref{inq2}  yield the upper bound described in part $(a)$.

On the other hand, we recall the following identity established in \cite{Tib}, which says that
\begin{equation}\label{symmetric}
\left(\int_{0}^{1}du\,\tilde{V}(u)\right)^k= k!\int_{I_k}d\lambda^{(k)}\,\myprod{j}{1}{k}\tilde{V}(\lambda_j),
\end{equation}
for any $\tilde{V}:[0,1]\rightarrow \R_+$ Lebesgue measurable positive function. Next, by  observing that $$\int_{0}^{t}ds \mathbbm{1}_{\dom}(X_s)=t\int_{0}^{1}du\mathbbm{1}_{\dom}(X_{tu}),$$  we arrive due to an application of  $\eqref{symmetric}$ with $\tilde{V}(u)=\mathbbm{1}_{\dom}(X_{tu})$  at
\begin{equation}
\left(\int_{0}^t ds\,\mathbbm{1}_{\dom}(X_{s})\right)^k= k!\,t^k\,\int_{I_k}d\lambda^{(k)}\myprod{j}{1}{k}\mathbbm{1}_{\dom}(X_{t\lambda_j}).
\end{equation}
Thus, because of the formula \eqref{stablebridge}, we obtain that $$\E^{t}_{x,y}\pint{\myprod{j}{1}{k}\mathbbm{1}_{\dom}(X_{t\lambda_j})}=\int_{\R^{kd}}\Prob^{t}_{x,y}\pthesis{X_{t\lambda_1}\in dz_1,...,X_{t\lambda_k}\in dz_k}\myprod{j}{1}{k}\mathbbm{1}_{\dom}(z_j)$$ is also equal to

\begin{align}\label{ineq2}
\int_{\dom^{k}}\frac{dz^{(k)}}{\palp(x,y)}p_{t\lambda_{1}}^{(\alpha)}(x,z_{1})\pthesis{\myprod{j}{1}{k-1}p_{t(\lambda_{j+1}-\lambda_j)}^{(\alpha)}(z_{j+1},z_j)}p_{t(1-\lambda_k)}^{(\alpha)}(z_{k},y).
\end{align}

Next, it  follows from  Fubini's theorem and \eqref{Tdef} that $T_{\dom}^{(k)}(t)$ can be rewritten as
$$k!\,t^k\int_{I_k}d\lambda^{(k)}\int_{\Rd}dx\int_{\Rd}dy\,\palp(x,y)\E^{t}_{x,y}\pint{\myprod{j}{1}{k}\mathbbm{1}_{\dom}(X_{t\lambda_j})}.$$
Therefore, by appealing to \eqref{ineq2}, \eqref{basequ} and Fubini's theorem again, we obtain that $T_{\dom}^{(k)}(t)$ is equal to 
\begin{align*}
&k!\,t^k\int_{I_k}d\lambda^{(k)}\int_{\R^{d}}dx\,p^{(\alpha)}_{t\lambda_1}(x,z_1)\int_{\dom^{k}}dz^{(k)}\,\myprod{j}{1}{k-1}p_{t(\lambda_{j+1}-\lambda_j)}^{(\alpha)}(z_{j+1},z_j)\int_{\Rd}dy\,p_{t(1-\lambda_{k})}^{(\alpha)}(z_k,y)\\ \nonumber
&=k!t^k \int_{I_k}d\lambda^{(k)}h_{\dom}^{(k)}(t,\lambda_2-\lambda_1,...,\lambda_k-\lambda_{k-1})
\end{align*}
with  $I_k$ and $h_{\dom}^{(k)}$ as defined in \eqref{heavynotation}
and \eqref{hfunction}, respectively. Thus, we conclude part $(b)$.

 In order to  obtain the lower bound in part $(a)$, we consider the exit time $\et$ defined in \eqref{et} and notice that the following expectation $\E^{t}_{x,y}\pint{\pthesis{\int_{0}^{t}ds\,\ind(X_s)}^{k}}$ which can be regarded as
\begin{align*}
\E^{t}_{x,y}\pint{\pthesis{\int_{0}^{t}ds\,\ind(X_s)}^{k}; t\leq \et}+\E^{t}_{x,y}\pint{\pthesis{\int_{0}^{t}ds\,\ind(X_s)}^{k};t> \et}
\end{align*}
is bounded below by 
\begin{align*} 
\E^{t}_{x,y}\pint{\pthesis{\int_{0}^{t}ds\,\ind(X_s)}^{k}; t\leq \et}= t^{k}\,\Prob^{t}_{x,y}\pthesis{t\leq \et}
\end{align*}
where the last identity is a consequence of the fact that $\int_{0}^{t}ds\,\ind(X_s)=t$ under the event $\set{t\leq \et}=\set{X_s\in \dom, \forall s\leq t}$. Therefore, based on our previous estimates and definitions, we arrive at
\begin{align}\label{ineimp2}
t^{-k}T^{(k)}_{\dom}(t)&\geq \, \int_{\Rd}dx\int_{\Rd}dy\, \palp(x,y)\,\Prob^{t}_{x,y}\pthesis{t\leq \et}\\ \nonumber
&\geq \int_{\dom}dx\int_{\dom}dy\, \palp(x,y)\,\Prob^{t}_{x,y}\pthesis{t\leq \et}.
\end{align}
 Now, since $X_t$  has a density which is  absolutely continuous with respect to the Lebesgue measure, it follows by appealing to conditional probabilities that
 \begin{align*}
 \Prob_x\pthesis{t\leq \et}&=\int_{\Rd}dy\, \Prob_x\pthesis{t\leq \et |X_t=y}\palp(x,y)\\ \nonumber
 &=\int_{\dom}dy \,\Prob_{x,y}^{t}\pthesis{t\leq \et }\palp(x,y).
 \end{align*}
Hence, by combining \eqref{ineimp2} and \eqref{defshc} together with the last identity, we have shown that $t^{-k}T^{(k)}_{\dom}(t)\geq Q_{\dom}^{(\alpha)}(t)$. This finishes the proof of part $(a)$.

To conclude \eqref{implimit} and the proof of the theorem, it suffices to show due to part $(a)$ that $\varliminf\limits_{\tgo}Q_{\dom}^{(\alpha)}(t)\geq \ld$. To prove this, we turn to Fatou's Lemma and \eqref{defshc} to obtain that
$$\varliminf\limits_{\tgo}Q_{\dom}^{(\alpha)}(t)\geq \int_{\dom}dx \,\varliminf\limits_{\tgo}\Prob_x(t\leq \et)=\int_{\dom}dx\, \Prob_x(0\leq \et) = \int_{\dom}dx =\ld.$$

\end{proof}

The following result shows us how the function $\Psi_{\dom}^{(\alpha)}(t)$ is related to the heat content of $\dom$ 
with respect to the stable process $\proc$.

\begin{thm}\label{lem2} Let $\dom\in \lm$ have finite measure and consider the heat content $\Halp$ defined in \eqref{hcdef}. Then, there exists a nonnegative function $R(t)$ such that for all $t\geq 0$, we have that
\begin{align}\label{identity1}
\Psi_{\dom}^{(\alpha)}(t)+t\pthesis{\frac{t}{2}-1}\ld=t^2\int_{0}^{1}d\lambda_1\int_{\lambda_1}^{1}d\lambda_2\,\pthesis{\ld-\mathbb{H}_{\Omega}^{(\alpha)}(t(\lambda_2-\lambda_1))}+R(t).
\end{align}
Moreover,
\begin{enumerate}
\item[$(i)$]for all $t\geq 0$, we have 
\begin{align*}
\frac{e^{-t}}{3!}T^{(3)}_{\dom}(t)\leq  R(t)\leq \frac{\abs{\dom}}{3!}t^3,
\end{align*}
with $T^{(3)}_{\dom}(t)$ as defined in Theorem \ref{thmT} and

\item[$(ii)$] 
\begin{align*}
\lim\limits_{\tgo}\frac{R(t)}{t^3}=\frac{\ld}{3!}.
\end{align*}

\end{enumerate}

\end{thm}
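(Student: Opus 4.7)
The plan is to apply Taylor's theorem with integral remainder to $f(w)=1-e^{-w}$ and then combine the resulting moment expansion with the representation of $T_{\dom}^{(2)}(t)$ supplied by Theorem \ref{thmT}(b). From $f(0)=0$, $f'(0)=1$, $f''(0)=-1$ and $f'''(w)=e^{-w}$ one obtains, for every $w\geq 0$,
$$1 - e^{-w} = w - \frac{w^2}{2} + \int_0^{w}\frac{(w-s)^2}{2}\,e^{-s}\,ds,$$
with the remainder nonnegative and sandwiched as $\dfrac{e^{-w}w^3}{3!}\leq \int_0^{w}\frac{(w-s)^2}{2}e^{-s}\,ds\leq \dfrac{w^3}{3!}$.

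I would substitute $W:=\int_0^t ds\,\ind(X_s)$ for $w$, take the bridge expectation $\E^{t}_{x,y}$, and then integrate against $\palp(x,y)\,dx\,dy$. Since every term is nonnegative, Fubini applies without issue, yielding the decomposition
$$\pf = T_{\dom}^{(1)}(t) - \tfrac{1}{2}T_{\dom}^{(2)}(t) + R(t),$$
where $R(t)\geq 0$ is defined as the double integral of the expected third-order remainder. The computation at the end of the proof of Proposition \ref{bl1} already identifies $T_{\dom}^{(1)}(t)=t\ld$, and Theorem \ref{thmT}(b) together with the observation (made in the remark) that $h_{\dom}^{(2)}(t,a)=\mathbb{H}_{\Omega}^{(\alpha)}(ta)$ gives
$$T_{\dom}^{(2)}(t) = 2t^2\int_0^1 d\lambda_1\int_{\lambda_1}^1 d\lambda_2\,\mathbb{H}_{\Omega}^{(\alpha)}(t(\lambda_2-\lambda_1)).$$
Using $\int_0^1 d\lambda_1\int_{\lambda_1}^1 d\lambda_2 = 1/2$ to rewrite $\tfrac{t^{2}\ld}{2}$ as a double integral and rearranging produces exactly the claimed identity \eqref{identity1}.

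For part (i), the pointwise bounds above combined with $W\leq t$ a.s.\ give
$$\tfrac{e^{-t}}{3!}W^3 \leq \int_0^W \tfrac{(W-s)^2}{2}e^{-s}\,ds \leq \tfrac{W^3}{3!}$$
almost surely. Taking $\E^{t}_{x,y}$ and integrating against $\palp(x,y)\,dx\,dy$ yields $\tfrac{e^{-t}}{3!}T_{\dom}^{(3)}(t)\leq R(t)\leq \tfrac{1}{3!}T_{\dom}^{(3)}(t)$, and combining the upper bound with the estimate $T_{\dom}^{(3)}(t)\leq t^3\ld$ from Theorem \ref{thmT}(a) gives the upper bound stated in (i); the lower bound is just the left half of the sandwich. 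For (ii) I would divide the same sandwich by $t^{3}$ and invoke the limit \eqref{implimit} together with $e^{-t}\to 1$ as $\tgo$, so that both extremes of the sandwich converge to $\ld/3!$.

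The only place where genuine care is needed is the Fubini interchange used to push $\E^{t}_{x,y}$ inside the integral defining the remainder, but since every integrand is nonnegative this is routine. No step poses a serious obstacle; the argument is essentially a careful bookkeeping of the Taylor expansion against the first three moment integrals $T_{\dom}^{(k)}(t)$ already studied.
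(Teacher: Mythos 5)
Your proposal is correct and follows essentially the same route as the paper: Taylor expand $1-e^{-w}$ to second order, reduce to the moment integrals $T_{\dom}^{(1)}$, $T_{\dom}^{(2)}$, $T_{\dom}^{(3)}$ from Theorem \ref{thmT}, identify $h_{\dom}^{(2)}$ with the heat content, and rearrange. The only (cosmetic) difference is that you use the integral form of the Taylor remainder while the paper uses the Lagrange form $e^{-\Theta_t}W^3/3!$ with a random $\Theta_t\in(0,W)$; both produce the identical sandwich $\frac{e^{-t}}{3!}T_{\dom}^{(3)}(t)\leq R(t)\leq \frac{1}{3!}T_{\dom}^{(3)}(t)$ and thus the same conclusions, with your integral form having the mild advantage of sidestepping any measurability question about the random intermediate point.
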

\begin{proof}
By using the well known Taylor expansion of the exponential function of order 2, we have that $1-e^{-x}=x-\frac{1}{2}x^2 + e^{-\theta_x}\frac{x^3}{3!}$ for some $0<\theta_x<x$. Hence, this expansion combined with \eqref{Schdef} and \eqref{Tdef} yield
\begin{align}\label{p1}
\Psi_{\dom}^{(\alpha)}(t)=T_{\dom}^{(1)}(t)-\frac{1}{2}T_{\dom}^{(2)}(t)+R(t),
\end{align}
where
\begin{align*}
R(t)=\frac{1}{3!}\int_{\Rd}dx\int_{\Rd}dy \,\palp(x,y)\E^{t}_{x,y}\pint{e^{-\Theta_t}\pthesis{\int_{0}^{t}ds \mathbbm{1}_{\dom}(X_s)}^{3}}
\end{align*}
with $\Theta_t$ a random variable satisfying $$0<\Theta_t<\int_{0}^{t}ds\ind(X_s)\leq t.$$ Thus, part $(i)$ follows from the fact that $e^{-t}\leq e^{-\Theta_t}\leq 1$, the definition of $R(t)$ and Theorem \ref{thmT}.

On the other hand,  $(ii)$ is easily obtained by combining part $(i)$ together with the results established  in Theorem \ref{thmT}.
%\begin{align}
%\varliminf\limits_{\tgo}\frac{T^{(3)}_{\dom}(t)}{t^3}\geq \ld.
%\end{align}

Finally, we observe that $T_{\dom}^{(1)}(t)=t|\dom|$ and
\begin{align*}
T_{\dom}^{(2)}(t)=2!t^2 \int_{0}^{1}d\lambda_1\int_{\lambda_1}^{1}d\lambda_2\,h_{\dom}^{(2)}(t,\lambda_2-\lambda_1).
\end{align*}
Now,  notice that \eqref{hfunction}  and \eqref{hcdef} imply 
$h_{\dom}^{(2)}(t,\lambda_2-\lambda_1)=\mathbb{H}_{\Omega}^{(\alpha)}(t(\lambda_2-\lambda_1))$
so that \eqref{p1} and our previous estimates allow us to conclude that
$$\Psi_{\dom}^{(\alpha)}(t)=t\ld-t^2\int_{0}^{1}d\lambda_1\int_{\lambda_1}^{1}d\lambda_2\,\mathbb{H}_{\Omega}^{(\alpha)}(t(\lambda_2-\lambda_1))+R(t).$$
Therefore, the desired identity \eqref{identity1} is obtained by adding at both sides  of the last expression the term
$$\frac{\ld}{2}t^2=t^2\int_{0}^{1}d\lambda_1\int_{\lambda_1}^{1}d\lambda_2\,\ld.$$
\end{proof}

\section{proof of theorem \ref{mthm}}\label{sec:proofthm}

The proof of our main result relies on the following theorem regarding estimates for the heat content $\Halp$ whose proofs can be found in  \cite{Acu1}. 

\begin{thm}\label{essencialtools} 
Let $d\geq 2$ be an integer and consider $\dom \in \lm$ a bounded set with nonempty interior satisfying that $\per<\infty$.
\begin{enumerate}
\item[$ (a)$]Let $\alpha\in (1,2]$. Then, the following inequality holds for all $t>0$.
\begin{align}\label{ineq1,2}
\ld- \Halp\leq\frac{ t^{\frac{1}{\alpha}}}{\pi}\,\Gamma\pthesis{1-\frac{1}{\alpha}} \per.
\end{align}
Furthermore,
\begin{align}\label{lim1,2}
\mylim{\tgo}{\frac{\ld- \Halp}{t^{\frac{1}{\alpha}}}}=\frac{1}{\pi}\Gamma\pthesis{1-\frac{1}{\alpha}}\per.
\end{align}

\item[$(b)$] For $\alpha=1$, we have for all  $0<t<\min\set{diam(\dom),e^{-1}}$ that
\begin{align}\label{c1}
\ld- \mathbb{H}_{\Omega}^{(1)}(t)\leq \lambda(\dom)t+\frac{1}{\pi}\per
t\ln\pthesis{\frac{1}{t}}.
\end{align}
Here,
$$\lambda(\dom)=\frac{\ld\,A_d\,\kappa_{d}}{diam(\dom)}+ \frac{\per}{\pi}\pthesis{\ln(diam(\dom))+\int_{0}^{1}\frac{dr\,r^d}{(1+r^2)^\frac{d+1}{2}}},$$
 $\kappa_d$ as given in \eqref{kappadef} and $A_d$ is the surface area of the unit ball in $\Rd$.
In particular, we arrive at
\begin{align}\label{limsup}
\varlimsup_{\tgo}\frac{\ld- \mathbb{H}^{(1)}_{\Omega}(t)}{t\ln\pthesis{\frac{1}{t}}}\leq \frac{1}{\pi}\per.
\end{align}
%In addition, the inequality \eqref{limsup} is sharp. That is, if %B is the unit ball in $\Rd$, then
%\begin{align}\label{sharp}
%\mylim{\tgo}{\frac{\abs{B}- \mathbb{H}^{(1)}_{B}(t)}{t\ln\pthesis{\frac{1}{t}}}}=\frac{1}{\pi}Per(B).
%\end{align}

\item[$(c)$] For $0<\alpha<1$, 
\begin{align}\label{oto}
\mylim{\tgo}{\frac{\ld- \Halp}{t}}=
\beta_{\alpha,d}\,\,\myP_{\alpha}(\Omega),
\end{align}
\end{enumerate}
with $\myP_{\alpha}(\Omega)$ and $\beta_{\alpha,d}$ as provided  in \eqref{alphaper} and \eqref{Adef}, respectively.
\end{thm}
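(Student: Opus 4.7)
The plan is to start from the identity $\ld-\Halp=\int_{\Rd}dv\,\palp(v)\,\psi_{\dom}(v)$, where $\psi_{\dom}(v)=|\set{x\in\dom:x+v\notin\dom}|$, which follows by using $\int_{\Rd}\palp(v)dv=1$ in \eqref{hcdef} and applying Fubini. The key geometric input is the directional perimeter inequality
\begin{align*}
\psi_{\dom}(v)\leq \frac{|v|}{2}\int_{\partial^{*}\dom}|\hat{v}\cdot\nu_{\dom}(z)|\,d\mathcal{H}^{d-1}(z)\leq \frac{|v|}{2}\,\per,
\end{align*}
which I would establish by slicing $\dom$ with lines parallel to $v$: on each line the one--dimensional measure of points whose $v$--translate leaves $\dom$ equals $\sum_i\min(|v|,b_i-a_i)\leq |v|N(p)$, where $(a_i,b_i)$ are the intervals forming the line's intersection with $\dom$ and $N(p)$ is their number; Fubini combined with the coarea identity $\int N(p)\,dp=\tfrac{1}{2}\int_{\partial^{*}\dom}|\hat{v}\cdot\nu|\,d\mathcal{H}^{d-1}$ yields the claim.

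For part $(a)$, I would rescale $v=t^{1/\alpha}w$ via the self--similarity $\palp(t^{1/\alpha}w)t^{d/\alpha}=p_{1}^{(\alpha)}(w)$, apply the perimeter inequality, and exploit the rotational invariance of $p_{1}^{(\alpha)}$ to reduce to a one--dimensional marginal:
\begin{align*}
\ld-\Halp\leq \frac{t^{1/\alpha}}{2}\per\int_{\Rd}p_{1}^{(\alpha)}(w)|w\cdot e_{1}|\,dw=\frac{t^{1/\alpha}}{2}\per\,\E|Y|,
\end{align*}
where $Y$ has characteristic function $e^{-|\xi|^{\alpha}}$ (the first marginal of the isotropic $d$--dimensional stable law). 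The mean $\E|Y|$ is then evaluated from the Fourier representation $|y|=\pi^{-1}\int_{\R}\xi^{-2}(1-\cos(y\xi))d\xi$, giving $\E|Y|=\frac{2}{\pi}\int_{0}^{\infty}\xi^{-2}(1-e^{-\xi^{\alpha}})d\xi$; the substitution $u=\xi^{\alpha}$ followed by integration by parts delivers $\E|Y|=\frac{2}{\pi}\Gamma(1-\tfrac{1}{\alpha})$ and hence \eqref{ineq1,2}. The matching limit \eqref{lim1,2} comes from dominated convergence together with $\psi_{\dom}(h)/|h|\to\tfrac{1}{2}\int_{\partial^{*}\dom}|\hat{h}\cdot\nu|\,d\mathcal{H}^{d-1}$ as $|h|\downarrow 0$ (the $\min$ in the slicing saturates to $|h|$), so the perimeter inequality becomes asymptotically sharp.

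For part $(b)$, at $\alpha=1$ the Cauchy mean $\E|Y|$ diverges, so I would split at $|v\cdot e_{1}|=\mathrm{diam}(\dom)$, using the perimeter inequality on the small--displacement region and the crude bound $\psi_{\dom}(v)\leq\ld$ on the complement. The explicit Cauchy computation
\begin{align*}
\int_{|v\cdot e_{1}|\leq M}p_{t}^{(1)}(v)|v\cdot e_{1}|\,dv=\frac{2t}{\pi}\ln(M/t)+O(t),
\end{align*}
obtained by first integrating out the $d-1$ transverse coordinates to reduce to a logarithmically divergent one--dimensional integral, produces the coefficient $\frac{1}{\pi}$ in front of $t\ln(1/t)$; the term $\lambda(\dom)$ absorbs the tail and the $O(1)$ contributions. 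For part $(c)$, with $\alpha\in(0,1)$, I would apply dominated convergence directly to $t^{-1}(\ld-\Halp)=\int_{\dom}\int_{\dom^{c}}t^{-1}\palp(x-y)\,dx\,dy$, using the pointwise limit $\lim_{\tgo}\palp(z)/t=\beta_{\alpha,d}|z|^{-d-\alpha}$ recalled from the introduction and the $t$--uniform majorant $c_{\alpha,d}|z|^{-d-\alpha}$ coming from \eqref{tcomp}, which is integrable on $\dom\times\dom^{c}$ precisely because $\myP_{\alpha}(\dom)<\infty$.

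The main obstacle is the sharp directional perimeter inequality itself for sets of finite perimeter that need not have smooth boundary: pushing the slicing argument through requires BV / geometric measure theory (coarea, or approximation of $\mathbbm{1}_{\dom}$ by smooth BV functions) and the pointwise identification of the limit $\psi_{\dom}(h)/|h|$ via one--dimensional sectioning. Everything else is bookkeeping of constants, tracking the $\frac{1}{\pi}$ through the 1D stable integrals in (a) and (b) and $\beta_{\alpha,d}$ through the L\'evy--measure limit in (c).
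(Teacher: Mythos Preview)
The paper does not prove this theorem at all: it is quoted verbatim from \cite{Acu1} with the sentence ``whose proofs can be found in \cite{Acu1}'' and is used as a black box in \S\ref{sec:proofthm}. So there is no proof in the present paper to compare against; what you have written is a reconstruction of the argument in the cited reference.

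Your reconstruction is essentially correct and, judging from the bibliography (the Galerne reference on covariograms and perimeter), it matches the method of \cite{Acu1}: the covariogram identity $\ld-\Halp=\int_{\Rd}\palp(v)\psi_{\dom}(v)\,dv$, the directional variation bound $\psi_{\dom}(v)\le\tfrac12\int_{\partial^{*}\dom}|v\cdot\nu|\,d\mathcal{H}^{d-1}$, Fubini over $\partial^{*}\dom$, rotational invariance of $\palp$, and the one--dimensional stable moment $\E|Y|=\tfrac{2}{\pi}\Gamma(1-\tfrac1\alpha)$ for part $(a)$; the Blumenthal--Getoor limit plus the majorant \eqref{tcomp} for part $(c)$. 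One small wrinkle in your part $(b)$: the split should be taken on a rotationally invariant region such as $\{|v|\le \mathrm{diam}(\dom)\}$, \emph{before} you apply Fubini over $\partial^{*}\dom$ and rotate $\nu(z)$ to $e_1$; splitting instead on $\{|v\cdot e_1|\le M\}$ does not commute with that rotation because the slab is not rotationally invariant. After the correct split one has
\[
\int_{|v|\le M}p_t^{(1)}(v)\,|v\cdot e_1|\,dv\;\le\;\int_{|v_1|\le M}p_t^{(1)}(v)\,|v_1|\,dv=\frac{t}{\pi}\ln\!\Bigl(1+\tfrac{M^{2}}{t^{2}}\Bigr),
\]
which gives the $\tfrac{1}{\pi}\,t\ln(1/t)$ leading term; the tail $\{|v|>M\}$ and the $O(1)$ piece land in $\lambda(\dom)$. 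Note also that the sharper directional bound (with $|v\cdot\nu|$) is essential here and in $(a)$: the cruder inequality $\psi_{\dom}(v)\le\tfrac{|v|}{2}\per$ would produce $\E|X_1|$ for the $d$--dimensional process rather than the one--dimensional marginal, and the constant would be wrong. Your closing remark that the BV/slicing justification of the covariogram derivative is the only nontrivial input is accurate.
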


We now proceed with the proof of Theorem \ref{mthm}. 
\\

{\bf Proof of part $(i)$:}
By Theorem \ref{lem2} and \eqref{ineq1,2}, we deduce for all $t>0$  that
\begin{align*}
\Psi_{\dom}^{(\alpha)}(t)+t\pthesis{\frac{t}{2}-1}\ld\leq 
\frac{1}{\pi}\Gamma\pthesis{1-\frac{1}{\alpha}}t^{2+\frac{1}{\alpha}}\per\int_{0}^{1}d\lambda_1\int_{\lambda_1}^{1}d\lambda_2\,\pthesis{\lambda_2-\lambda_1}^{\frac{1}{\alpha}}+\frac{t^3}{3!}\ld.
\end{align*}
Thus,  \eqref{ineq4} follows from above inequality and the fact that
$$\int_{0}^{1}d\lambda_1\int_{\lambda_1}^{1}d\lambda_2\,\pthesis{\lambda_2-\lambda_1}^{\frac{1}{\alpha}}=\frac{\alpha^{2}}{(1+\alpha)(1+2\alpha)}.$$

Now,  it is easy to see that \eqref{ineq4} and the fact that $1<\alpha\leq 2$ imply 
$$\varlimsup\limits_{\tgo}\frac{\Psi_{\dom}^{(\alpha)}(t)+t\pthesis{\frac{t}{2}-1}\ld}{t^{2+\frac{1}{\alpha}}}\leq \varlimsup\limits_{\tgo}\pthesis{\frac{\ld}{3!}t^{1-\frac{1}{\alpha}} +c_{\alpha}^{*}\per}=c_{\alpha}^{*}\per.$$
Next, we notice that the right hand side of \eqref{identity1} is a sum of two nonnegative terms because of \eqref{boundhc} and Theorem \ref{lem2} so that  an application of Fatou's Theorem to \eqref{identity1} yields that
\begin{align*}
\varliminf\limits_{\tgo}\frac{\Psi_{\dom}^{(\alpha)}(t)+t\pthesis{\frac{t}{2}-1}\ld}{t^{2+\frac{1}{\alpha}}}&\geq \int_{0}^{1}d\lambda_1\int_{\lambda_1}^{1}d\lambda_2\,(\lambda_2-\lambda_1)^{\frac{1}{\alpha}}\varliminf\limits_{\tgo}\frac{\ld-\mathbb{H}_{\Omega}^{(\alpha)}(t(\lambda_2-\lambda_1))}{\pthesis{t(\lambda_2-\lambda_1)}^{\frac{1}{\alpha}}}\\ \nonumber
&=c_{\alpha}^{*}\per,
\end{align*}
where the last equality comes from the limit given in \eqref{lim1,2} and our previous computations. Therefore, the limit \eqref{lim4} holds true.
\\

{\bf Proof of part $(ii)$:} we start by providing the values of the following integrals to be needed below.
\begin{align}\label{basicint}
\int_{I_2}d\lambda^{(2)}(\lambda_2-\lambda_1)&=\frac{1}{3!},\\ \nonumber
\int_{I_2}d\lambda^{(2)}(\lambda_2-\lambda_1)\ln(\lambda_2-\lambda_1)&=-\frac{5}{36}.
\end{align}

Now, by \eqref{c1} we have for  $\lambda_2-\lambda_1>0$ that $\ld- \mathbb{H}_{\Omega}^{(1)}(t(\lambda_2-\lambda_1))$ is bounded above by
\begin{align*} 
\lambda(\dom)(\lambda_2-\lambda_1)t+\frac{1}{\pi}\per
t(\lambda_2-\lambda_1)\ln\pthesis{\frac{1}{t(\lambda_2-\lambda_1)}}.
\end{align*}
Next, due to the basic properties of the logarithmic function, we observe that the last expression can be written as
\begin{align*}
\pint{\lambda(\dom)(\lambda_2-\lambda_1)-\frac{\per}{\pi}(\lambda_2-\lambda_1)\ln\pthesis{\lambda_2-\lambda_1}}t+\frac{\per}{\pi}(\lambda_2-\lambda_1)\,t\ln\pthesis{\frac{1}{t}}.
\end{align*}
Consequently, it follows from Theorem \ref{lem2} and \eqref{basicint} that $\Psi_{\dom}^{(1)}(t)+t\pthesis{\frac{t}{2}-1}\ld$ is bounded above by
\begin{align*}
\pthesis{\frac{\ld+\lambda(\dom)}{3!}+\frac{5}{36\pi}\per}t^3+
\frac{\per}{3!\pi}t^{3}\ln\pthesis{\frac{1}{t}}.
\end{align*}
Therefore, the desired conclusion easily follows from our previous  estimates by taking
$$\gamma_{d}(\dom)=\frac{\ld+\lambda(\dom)}{3!}+\frac{5}{36\pi}\per.$$

{\bf Proof of part $(iii)$:} we  observe that \eqref{basequ} implies for all $z_1\in \Rd$
\begin{align*}
1=\int_{\Rd}dz_2\,\palp(z_1,z_2)=\int_{\dom}dz_2\,\palp(z_1,z_2)+\int_{\dom^c}dz_2\,\palp(z_1,z_2).
\end{align*}
Thus, because of the definition of the heat content given in \eqref{hcdef}, we arrive at
\begin{align*}
\ld-\Halp=\int_{\dom}dz_1\int_{\dom^{c}}dz_2\,\palp(z_1,z_2).
\end{align*}
Next, by appealing to inequality \eqref{tcomp}, the last identity and \eqref{alphaper}, we obtain that $\ld-\mathbb{H}_{\Omega}^{(\alpha)}(t(\lambda_2-\lambda_1))$ is bounded above by 
\begin{align}\label{ineq10}
t\,c_{\alpha,d}(\lambda_2-\lambda_1)\,\int_{\dom}dz_1\int_{\dom^{c}}\frac{dz_2}{\abs{z_1-z_2}^{d+\alpha}}=t\,c_{\alpha,d}(\lambda_2-\lambda_1)\,\myP_{\alpha}(\Omega)
\end{align}
for all $t>0$ and $\lambda_2-\lambda_1>0$.  Notice that the function $\lambda_2-\lambda_1\in L^1(I_2)$ by \eqref{basicint} and
$$\lim\limits_{\tgo}\frac{\ld-\mathbb{H}_{\Omega}^{(\alpha)}(t(\lambda_2-\lambda_1))}{t}=\beta_{\alpha,d}(\lambda_2-\lambda_1)\,\myP_{\alpha}(\Omega)$$
so that an application of Lebesgue dominated convergence Theorem shows that
\begin{align*}
\lim\limits_{\tgo}\int_{0}^{1}d\lambda_1\int_{\lambda_1}^{1}d\lambda_2\,\pthesis{\frac{\ld-\mathbb{H}_{\Omega}^{(\alpha)}(t(\lambda_2-\lambda_1))}{t}}= \frac{\beta_{\alpha,d}}{3!}\,\myP_{\alpha}(\Omega).
\end{align*}
Thus, by Theorem \ref{lem2}, we deduce that
\begin{align*}
\lim\limits_{\tgo}\frac{\Psi_{\dom}^{(\alpha)}(t)+t\pthesis{\frac{t}{2}-1}\ld}{t^3}&=\lim\limits_{\tgo}\int_{0}^{1}d\lambda_1\int_{\lambda_1}^{1}d\lambda_2\,\pthesis{\frac{\ld-\mathbb{H}_{\Omega}^{(\alpha)}(t(\lambda_2-\lambda_1))}{t}}+\frac{R(t)}{3!}\\ 
&= \frac{1}{3!}\pthesis{\beta_{\alpha,d}\,\myP_{\alpha}(\Omega)+\ld}.
\end{align*}

Finally, Theorem \ref{lem2} and \eqref{ineq10} show that
\begin{align*}
\Psi_{\dom}^{(\alpha)}(t)+t\pthesis{\frac{t}{2}-1}\ld \leq \frac{1}{3!}\pthesis{c_{\alpha,d}\,\myP_{\alpha}(\Omega)+\ld}
\end{align*}
and this completes the proof of our main result.
\\
\\
{\bf Acknowledgement}: This project has been supported by Universidad de Costa Rica, project 1976.

\end{document}